\newcommand{\al}{\alpha}
\newcommand{\bt}{\beta}
\newcommand{\Dl}{\Delta}
\newcommand{\vf}{\varphi}
\newcommand{\om}{\omega}
\newcommand{\bC}{\mathbb{C}}
\newcommand{\bR}{\mathbb{R}}
\newcommand{\bQ}{\mathbb{H}}
\newcommand{\ol}{\overline}
\newcommand{\pa}{\partial}
\numberwithin{equation}{section}
\title{On the Differentiability of Quaternion Functions}{On the Differentiability of Quaternion Functions}
\author{Omar Dzagnidze}{Omar Dzagnidze}
\address{A. Razmadze Mathematical Institute of I. Javakhishvili Tbilisi State University, 2, University St., Tbilisi 0186, Georgia}
\email{odzagni@rmi.ge}
\begin{document}

\maketitle

\begin{abstract}
Motivated by the general problem of extending the classical theory of holomorphic
functions of a complex variable to the case of quaternion functions, we give a notion of an $\bQ$-\emph{derivative}
for functions of one quaternion variable. We show that the elementary quaternion functions introduced
by Hamilton as well as the quaternion logarithm function possess such a derivative. We conclude by establishing
rules for calculating $\bQ$-derivatives.
\end{abstract}

\metadata{30G35}{30A05, 30B10}{Quaternion, $\bQ$-derivative, elementary quaternion functions}

%                1.
\section{Introduction}

    The importance of the theory of holomorphic (analytic) functions of one
complex variable  suggests looking for a similar theory for functions of three and more real variables. Considering the case of four variables, where the quaternion algebra $\bQ$ should replace the field $\bC$ of complex numbers, is, of course, especially natural.

Let us recall that the quaternion algebra was introduced by W.R. Hamilton in 1843 (see, e.g., \cite{Crowe 1967, Aleksandrova 1982}), and that, according to  {\em Frobenius' theorem} (see, e.g., \cite{Kantor and Solodovnikov 1973}), every finite-dimensional (associative) division algebra over the field $\bR$ of real numbers is isomorphic either to $\bR$, or to $\bC$, or to $\bQ$.

In a sense, there are three well-known methods for constructing the theory of holomorphic
functions of one complex variable: the derivative method, the polynomial method and the gradient method.
In the case of quaternion functions, none of them leads to a satisfactory quaternionic analogue
of the notion of a holomorphic function.

\subsection*{Derivative method} This method is based on the use of the limit definition of a derivative.
In the case of quaternion functions, such an approach yields at least two notions
of the quaternion derivative of a quaternion function $f(z)$. The point is that, as quaternions do not commute,
there are two different possibilities for regarding the ratio $\frac{f(z+h)-f(z)}{h}$:
one could replace it either by $[f(z+h)-f(z)] h^{-1}$ or by $h^{-1}[f(z+h)-f(z)]$.
This leads to the notion of a
right-hand derivative $$A(z)=\lim\limits_{h\to 0} [f(z+h)-f(z)] h^{-1}$$ and
to the notion of a left-hand derivative $$B(z)=\lim\limits_{h\to 0} h^{-1}[f(z+h)-f(z)] ,$$
provided that the corresponding one-sided limit exists.
Actually, both of these notions are too restricted. It turns out \cite{Krylov 1947,
Meylikhzon 1948, Buff 1973, Martinez 1975, Haddad 1981, 27} that only the functions $\vf(z)=az+b$
possess the right-hand derivative, and only the functions $\psi(z)=za+b$ possess the left-hand derivative,
while only the functions $\chi(z)=rz+b$ possess both left- and right-hand derivatives,
and they are equal. Here $a$ and $b$ are quaternions, while $r$ is a real number.

\subsection*{Polynomial method}
Let us consider a polynomial $P(x,y)\!=\!\sum\limits_{m,n} A_{m,n}x^my^n$ of two real
variables $x$, $y$ with complex coefficients $A_{m,n}=\al_{m,n}+i\bt_{m,n}$.
Replacing $x$ with $\frac{1}{2}(\ol{z}+z)$, and replacing $y$ with $\frac{1}{2}\,i(\ol{z}-z)$, we
obtain the polynomial $P^*(z,\ol{z})$ of the complex variables $z=x+iy$ and $\ol{z}=x-iy$. In order
that $P^*(z,\ol{z})$ be a polynomial of only the variable $z$, it is necessary and
sufficient that $P^*(z,\ol{z})$ satisfies the well-known Cauchy--Riemann condition.
This condition is precisely what is needed for distinguishing the class of polynomials
of the variable $z$.

    An analogous approach to the polynomials of the variables $x_0$, $x_1$,
$x_2$, $x_3$ with quaternion coefficients is too general to give a desired result
\cite{Sudbery 1979}. Indeed, the Hausdorff formulas $x_0=\frac{1}{4}(z-i_1zi_1-i_2zi_2-i_3zi_3)$, $x_1=\frac{1}{4i_1}(z-i_1zi_1+i_2zi_2+i_3zi_3)$, $x_2=\frac{1}{4i_2}(z+i_1zi_1-i_2zi_2+i_3zi_3)$ and $x_3=\frac{1}{4i_3}(z+i_1zi_1+i_2zi_2-i_3zi_3)$
\cite{Hausdorff 1900} allow us to express the real coordinates $x_k$ of the quaternion $z=x_0+x_1i_1+x_2i_2+x_3i_3$
 in terms of $z$ itself, without using the conjugate quaternion $\ol{z}=x_0-x_1i_1-x_2i_2-x_3i_3$. (Note that along
with the Hausdorff formulas, we also have the formulas $x_0=\frac{1}{2}(\ol{z}+z)$, $x_1=\frac{1}{2}(i_1\ol{z}-zi_1)$, $x_2=\frac{1}{2}(i_2\ol{z}-zi_2)$ and $x_3=\frac{1}{2}(i_3\ol{z}-zi_3)$, but unlike the complex case, they are not essential.) Hence
the functions which can be represented by quaternionic power series are just those which can be represented by power
series in four real variables.

\subsection*{Gradient method}
Loomann \cite{Loomann 1923} and Menchoff \cite{Menchoff 1935}
proved that any complex-valued continuous function that satisfies
the Cauchy--Riemann condition in a complex domain, is holomorphic in the
same domain\footnote{Tolstov \cite{Tolstov 1942} proved the some result replacing
continuity of a function by its boundedness.}
\cite{Saks 1937}. Thus, in order to extend the theory of holomorphic functions to the quaternionic setting,
one of the possible ways is to try to find a quaternionic analogue of the Cauchy-Riemann equations.
In 1935, Fueter\footnote{Dr. Rudolf Fueter is the
author of the book ``Synthetische Zahlentheorie'', Berlin and Leipzig, 1921, pp. VIII+271.}\cite{Fueter 1934--1935}
proposed such a quaternionic analogue by introducing two quaternion gradient operators
$$\frac{\pa^r}{\pa z}=\frac{\pa}{\pa x_0}+\frac{\pa}{\pa x_1}i_1+\frac{\pa}{\pa x_2}i_2+\frac{\pa}{\pa x_3}i_3$$
and
$$\frac{\pa^l}{\pa z}=\frac{\pa}{\pa x_0}+i_1\frac{\pa}{\pa x_1}+i_2\frac{\pa}{\pa x_2}+i_3\frac{\pa}{\pa x_3}.$$
He calls a quaternion function $f(z)$ of the quaternion variable $z=x_0+x_1i_1+x_2i_2+x_3i_3$
{\it right-regular}, provided
\begin{equation}\label{1-1}
   \frac{\pa^rf}{\pa z}=0.
\end{equation}
 {\it Left-regularity} of $f$ is defined in a similar way by requiring that
\begin{equation}\label{1-2}
  \frac{\pa^lf}{\pa z}=0.
\end{equation}
$f$ is called \emph{regular} if it is simultaneously
left and right regular. Such quaternion functions are to be thought of as the appropriate
generalization of holomorphic functions to the quaternionic setting.

    Any right or left regular quaternion function is harmonic, i.e.
satisfies Laplace's equation
$$
    \Dl\vf(z)=0, \quad \Dl=\frac{\pa^2}{\pa x_0^2}+\frac{\pa^2}{\pa x_1^2}+\frac{\pa^2}{\pa x_2^2}+\frac{\pa^2}{\pa x_3^2}\,.
$$ Moreover, for any real harmonic function, there is a regular function whose real part is exactly the
given function (see \cite{Fueter 1934--1935}). Thus there are plenty of regular functions.

    Although Fueter's approach is quite powerful and gives rise to a fully formed theory of regular functions,
it has some significant flaws. One such is the fact that even the functions $\psi_n(z)=z^n$  fail to be regular
(they are non-harmonic, since, for example, $\Dl(z^2)=-4$ and hence cannot be regular). Note that the functions $\Dl z^n$ are
regular \cite{Fueter 1934--1935}. Another flaw is that the class of regular functions of a
quaternion variable do not form an algebra in the same sense that the holomorphic functions do:
for example, regular functions  cannot be multiplied to give further regular functions.

\bigskip

    The aim of the paper is to propose a new definition of a derivative for quaternion functions of one quaternion
variable and show that all the elementary functions as well as the quaternion logarithm function possess such a derivative.
We also give rules for calculating such derivatives.

    We conclude Introduction with an incomplete list of references where
various applications of quaternions are discussed \cite{Arnol'd 1995, Branets and
Shmiglevski 1973, Dirac 1944--1945, 12, 15, Gurlebeck and Sprossig 1989,
Hoshi 1962, Kravchenko and Shapiro 1996, Linnik 1949, Morse and Feshbach
1953, Peterz 1969, 41, Taussky 1970, Vakhania 1998}.

%        2.
\section{The notion of an $\bQ$-derivative}

   We begin by the following

%    Definition 2.1.
\begin{definition}\label{d2.1}\em
A quaternion function $f(z)$, $z=x_0+x_1i_1+x_2i_2+x_3i_3$, defined on some
neighborhood $G \subset \bQ$ of a point $z^0=x_0^0+x_1^0i_1+x_2^0i_2+x_3^0i_3$, is called $\bQ$-\emph{differentiable at} $z^0$ if there exist
two sequences of quaternions $A_k(z^0)$ and $B_k(z^0)$ such that $\sum\limits_k A_k(z^0) B_k(z^0)$ is finite and that
the increment $f(z^0+h)-f(z^0)$ of the function $f(z)$ can be represented as
\begin{equation}\label{2-1}
f(z^0+h)-f(z^0)=\sum_k A_k(z^0)\cdot h\cdot B_k(z^0)+\om(z^0,h),
\end{equation}
where
%    (2-2)
\begin{equation}\label{2-2}
    \lim_{h\to 0} \frac{|\om(z^0,h)|}{|h|}=0
\end{equation} and $z^0+h \in G$.
In this case, the quaternion $\sum\limits_k A_k(z^0) B_k(z^0)$ is called the $\bQ$-\emph{derivative of the function $f$ at the point}
$z^0$ and is denoted $f'(z^0)$. Thus
%    (2-3)
\begin{equation}\label{2-3}
    f'(z^0)=\sum_k A_k(z^0)B_k(z^0).
\end{equation}
\end{definition}

    The uniqueness of the $\bQ$-derivative follows from the fat that the right-hand part of \eqref{2-3}, if it exists,
is just the partial derivative $f'_{x_0}(z^0)$ of $f(z)$ at $z^0$ with respect to its real variable.

    In the sequel, the symbol $o(h)$ denotes any function $\om(z^0,h)$ satisfying \eqref{2-2}.

\begin{remark}\label{r1.1}
Note that the same definition still makes perfectly good sense for any map between Banach algebras.
Moreover, all the proofs  of our results remain valid (except for Proposition \ref{p3.5}, which still remain
true if we take $\phi$ to be invertible in a neighborhood of $z^0$), since they require only those properties of $\bQ$
which are also possessed by any Banach algebra.
\end{remark}

\bigskip
The following functions introduced by Hamilton

\begin{align*}
    & z^n, \quad n=0,1,2,\dots\;, \\
    e^z & =1+z+\frac{z^2}{2!}+\frac{z^3}{3!}+\cdots \;,\\
    \cos z & =1-\frac{z^2}{2!}+\frac{z^4}{4!}-\cdots \;,\\
    \sin z & =z-\frac{z^3}{3!}+\frac{z^5}{5!}-\cdots
\end{align*}
are the \emph{basic elementary quaternion functions of one quaternionic variable~$z$}.

    Let us now show that the basic elementary functions are $\bQ$-differentiable.

%    Proposition 2.2.
\begin{proposition}\label{p2.2}$(z^n)'=nz^{n-1} \,\,\text{for}\,\, n=0,1,2,\dots\,, \,\,\text{and for } z \in \bQ.$
\end{proposition}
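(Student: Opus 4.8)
The plan is to verify Definition~\ref{d2.1} directly, by writing out the increment $(z^0+h)^n-(z^0)^n$ as a non-commutative expansion and isolating the part that is linear in $h$.

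First I would dispose of the trivial cases. For $n=0$ the function is the constant $1$, so the increment is $0$, all $A_k(z^0),B_k(z^0)$ may be taken to vanish, and $f'(z^0)=0$, which is the value of $nz^{n-1}$ read as $0$ when $n=0$. For $n=1$ we have $f(z^0+h)-f(z^0)=h=1\cdot h\cdot 1$, so one takes a single pair $A_1(z^0)=B_1(z^0)=1$ and obtains $f'(z^0)=1$.

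For general $n\ge 1$, I would expand $(z^0+h)^n$ as the sum of the $2^n$ ordered products of $n$ factors, each factor being either $z^0$ or $h$; here associativity of $\bQ$ is used, and non-commutativity forces us to keep track of the order of the factors. Grouping the terms by the number of occurrences of $h$: the term with no $h$ is $(z^0)^n$; the terms with exactly one $h$ are $(z^0)^j\,h\,(z^0)^{\,n-1-j}$ for $j=0,\dots,n-1$; and the remaining terms each contain at least two factors equal to $h$. Subtracting $(z^0)^n$ and using multiplicativity of the quaternion modulus, every term with $k\ge 2$ factors equal to $h$ has modulus at most $|z^0|^{\,n-k}|h|^k\le |z^0|^{\,n-k}|h|^2$ once $|h|\le 1$, and since there are at most $2^n$ such terms, their sum is bounded by $C(z^0)|h|^2$ and is therefore $o(h)$. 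This yields the representation
\[
(z^0+h)^n-(z^0)^n=\sum_{j=0}^{n-1}(z^0)^j\,h\,(z^0)^{\,n-1-j}+o(h),
\]
which is exactly \eqref{2-1} with $A_j(z^0)=(z^0)^j$ and $B_j(z^0)=(z^0)^{\,n-1-j}$ for $j=0,\dots,n-1$. The sum $\sum_j A_j(z^0)B_j(z^0)$ is finite since it has finitely many terms, and because $z^0$ commutes with itself each summand equals $(z^0)^{n-1}$, so $f'(z^0)=n(z^0)^{n-1}$, as claimed.

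I do not expect a genuine obstacle here: the only points requiring care are the bookkeeping of the non-commutative expansion and the check that the collection of quadratic-and-higher terms is really $o(h)$, for which the (sub)multiplicativity of the quaternion norm — a property shared by any Banach algebra, cf.\ Remark~\ref{r1.1} — is precisely what is needed. An alternative would be induction on $n$ via $(z^0+h)^n=(z^0+h)(z^0+h)^{n-1}$ together with the inductive hypothesis, but that amounts to an \emph{ad hoc} product-rule computation, and since the general rules for computing $\bQ$-derivatives appear only later in the paper, the direct expansion above is cleaner and self-contained.
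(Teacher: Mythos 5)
Your proof is correct, and it arrives at exactly the same first-order decomposition as the paper, namely $(z+h)^n-z^n=\sum_{j=0}^{n-1}z^j h z^{n-1-j}+o(h)$ (the paper's Equation \eqref{2-5}), with the same choice $A_j=z^j$, $B_j=z^{n-1-j}$ and the same final summation $\sum_j z^j z^{n-1-j}=nz^{n-1}$. The only divergence is in how that identity is established: the paper proves \eqref{2-5} by induction on $n$, multiplying the inductive expansion of $(z+h)^k$ on the left by $(z+h)$ and silently absorbing all the newly created second-order terms into $o(h)$; you instead expand $(z+h)^n$ directly into its $2^n$ ordered words in $z$ and $h$, collect the words containing exactly one $h$, and bound the remainder explicitly by $C(z^0)|h|^2$ using multiplicativity of the quaternion norm. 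Your route is slightly longer to set up but makes the error term completely explicit, which the paper's induction glosses over; it is also the version that transfers verbatim to an arbitrary Banach algebra (cf.\ Remark \ref{r1.1}), where one only has submultiplicativity. Amusingly, the ``induction on $n$ via $(z^0+h)(z^0+h)^{n-1}$'' alternative you mention and set aside is precisely what the paper does.
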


%    Proof.
\begin{proof}
We first show that the following equality holds for $n=1,2,\dots$
%    (2-5)--(2-5')
\begin{align}
    (z+h)^n-z^n& =z^{n-1}h+z^{n-2}hz+z^{n-3}hz^2 \notag \\
    & \quad +\cdots+zhz^{n-2}+hz^{n-1}+o(h)  \label{2-5}
\end{align}
For $n=1$ it is obvious. Assuming now that it is  true for $n=k$,
we find
\allowdisplaybreaks
\begin{multline*}
    (z+h)^{k+1}-z^{k+1}=(z+h)(z+h)^k-z^{k+1}\\
    =(z+h)(z^k+z^{k-1}h+z^{k-2}hz+\cdots +zhz^{k-2}+hz^{k-1}+o(h))-z^{k+1}\\
    =z^{k+1}+z^kh+z^{k-1}hz+\cdots +z^2hz^{k-2}+zhz^{k-1}+hz^k+o(h))-z^{k+1}\\
    =z^kh+z^{k-1}hz+z^{k-2}hz^2+\cdots +z^2hz^{k-2}+zhz^{k-1}+hz^k+o(h).
\end{multline*}

It then follows from \eqref{2-3} and \eqref{2-5} that
%    (2-6)
%\begin{equation}\label{2-6}
    $$(z^n)'=z^{n-1}\cdot 1+z^{n-2}\cdot z+z^{n-3}\cdot z^2+\cdots +z\cdot z^{n-2}+1\cdot z^{n-1}=nz^{n-1}.$$ %%\qedhere
%\end{equation}
Thus by induction we have proved that $(z^n)'=nz^{n-1}$ for all $\quad n=0,1,2,\dots\,$
\end{proof}

   In order to proceed further, we need the following lemma.

%    Lemma 2.3.
\begin{lemma}\em
The following equalities and estimates are valid for $|h|<1$:
%    (2.7)--(2.10)
\begin{align*}
%\label{2-7}
    \frac{(z+h)^2-z^2}{2!} & =\frac{zh+hz}{2!}+A_2, \\
%\label{2-8}
    \frac{(z+h)^3-z^3}{3!} & =\frac{z^2h+zhz+hz^2}{3!}+A_3, \\
%\label{2-9}
    \frac{(z+h)^4-z^4}{4!} & =\frac{z^3h+z^2hz+zhz^2+hz^3}{4!}+A_4, \\
%\label{2-10}
    \frac{(z+h)^5-z^5}{5!} & =\frac{z^4h+z^3hz+z^2hz^2+zhz^3+hz^4}{5!}+A_5,
\end{align*}
and so on, where
%    (2-11)    (2.12)    (2.13)    (2.14)
\allowdisplaybreaks
\begin{align*}
%\label{2-11}
    A_2& =\frac{1}{2!}\,h^2, \quad |A_2|<|h|^2, \quad A_2=o(h); \\
%\label{2-12}
    A_3& =\frac{1}{3!}\,(zh^2+hzh+h^2z+h^3), \quad |A_3|<\frac{2^3}{3!}\,(|z|\,|h|^2+|h|^3) \\
    & < \begin{cases}
            \frac{2^3}{3!}\,|h|^2(1+|h|)<\frac{2^3}{3!}\,|h|^2\cdot \frac{1}{1-|h|} & \text{for} \;\; |z|<1, \\
            \frac{2^3}{3!}\,|z|\,|h|^2(1+|h|)<\frac{2^3}{3!}\,|z|\,|h|^2\cdot \frac{1}{1-|h|} & \text{for} \;\; |z|\geq 1;
        \end{cases} \notag \\
%\label{2-13}
    A_4& =\frac{1}{4!}\,(z^2h^2+zhzh+zh^2z+zh^3+hz^2h+hzhz+hzh^2+h^2z^2 \\
    & \quad +h^2zh+h^3z+h^4)), \quad |A_4|<\frac{2^4}{4!}\,(|z|^2\,|h|^2+|z|\,|h|^3+|h|^4) \notag \\
    & < \begin{cases}
            \frac{2^4}{4!}\,|h|^2(1+|h|+|h|^2)<\frac{2^4}{4!}\,|h|^2\cdot \frac{1}{1-|h|} & \text{for} \;\; |z|<1, \\
            \frac{2^4}{4!}\,|z|^2\,|h|^2(1+|h|+|h|^2)<\frac{2^4}{4!}\,|z|^2\,|h|^2\cdot \frac{1}{1-|h|} & \text{for}\;\; |z|\geq 1;
        \end{cases} \notag \\
%\label{2-14}
    A_5& =\frac{1}{5!}\,(z^3h^2+z^2hzh+z^2h^2z+z^2h^3+zhz^2h+zhzhz+zhzh^2\\
    & \quad +zh^2z^2 +zh^2zh+zh^3z+zh^4+hz^3h+hz^2hz+hz^2h^2 \notag \\
    & \quad +hzhz^2 +hzhzh+hzh^3z+hzh^3+h^2z^3+h^2z^2h+h^2zhz \notag \\
    & \quad +h^2zh^2+h^3z^2 +h^3zh+h^4z+h^5), \notag \\
    & \quad |A_5|<\frac{2^5}{5!}\,(|z|^3\,|h|^2+|z|^2\,|h|^3+|z|\,|h|^4+|h|^5) \notag \\
    & < \begin{cases}
            \frac{2^5}{5!}\,|h|^2(1+|h|+|h|^2+|h|^3)<\frac{2^5}{5!}\,|h|^2\cdot \frac{1}{1-|h|} & \text{for} \;\; |z|<1, \\
            \frac{2^5}{5!}\,|z|^3\,|h|^2(1+|h|+|h|^2+|h|^3)\\
            \qquad <\frac{2^5}{5!}\,|z|^3\,|h|^2\cdot \frac{1}{1-|h|}
                    & \text{for}\;\; |z|\geq 1,
        \end{cases} \notag
\end{align*}
and so on,
%    (2.15)
$$
    |A_n|<  \begin{cases}
                \frac{2^n}{n!}\,|h|^2\cdot \frac{1}{1-|h|} & \text{for} \;\; |z|<1, \\
                \frac{2^n}{n!}\,|z|^{n-2}|h|^2\cdot \frac{1}{1-|h|} & \text{for} \;\; |z|\geq 1.
            \end{cases}
$$
Therefore
%    (2-16)
$$
    \sum_{n=3}^\infty |A_n|<
            \begin{cases}
                |h|^2\cdot \frac{1}{1-|h|}\cdot \sum\limits_{n=3}^\infty \frac{2^n}{n!} & \text{for} \;\; |z|<1, \\
                |h|^2\cdot \frac{1}{1-|h|}\cdot \sum\limits_{n=3}^\infty \frac{2^n}{n!}\,|z|^{n-2} & \text{for} \;\; |z|\geq 1
            \end{cases}
$$
and the series $\sum\limits_{n=3}^\infty \frac{2^n}{n!}$ and $\sum\limits_{n=3}^\infty \frac{2^n}{n!}\,|z|^{n-2}$ are converging by virtue of the ratio test \emph{\cite{Marsden 1974}}.  Thus, $\sum\limits_{n=3}^\infty |A_n|=o(h)$ for any fixed finite quaternion $z$.
\end{lemma}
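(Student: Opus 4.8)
The plan is to begin by refining the expansion already used in the proof of Proposition~\ref{p2.2}, namely the noncommutative multinomial formula
$$(z+h)^n=\sum_{w\in\{z,h\}^n}w,$$
where the sum ranges over all $2^n$ words of length $n$ in the two letters $z$ and $h$ (this is immediate by induction on $n$). Sorting the words by the number $k$ of occurrences of the letter $h$: the unique word with $k=0$ is $z^n$; the $n$ words with $k=1$ are $z^jhz^{n-1-j}$ for $0\le j\le n-1$, and their sum is the ``derivative numerator'' $z^{n-1}h+z^{n-2}hz+\cdots+hz^{n-1}$; one then defines $n!\,A_n$ to be the sum of all remaining words, i.e. those in which $h$ occurs at least twice. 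Dividing by $n!$ yields precisely the displayed identities, and writing out by hand the words with at least two letters $h$ for $n=2,3,4,5$ gives the stated closed forms for $A_2,\dots,A_5$; this part is pure bookkeeping.

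For the estimates I would use two facts: the norm on $\bQ$ is multiplicative, so a word $w$ containing exactly $k$ letters $h$ has $|w|=|z|^{n-k}|h|^k$; and the number of such words is $\binom{n}{k}\le 2^n$. Hence, by the triangle inequality,
$$|A_n|\le\frac1{n!}\sum_{k=2}^n\binom{n}{k}|z|^{n-k}|h|^k\le\frac{2^n}{n!}\sum_{k=2}^n|z|^{n-k}|h|^k.$$
Now split into two cases: if $|z|<1$ then $|z|^{n-k}\le 1$; if $|z|\ge 1$ then $|z|^{n-k}=|z|^{n-2}|z|^{2-k}\le|z|^{n-2}$ for every $k\ge 2$. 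In either case, factoring out $|h|^2$ and bounding the finite geometric sum $\sum_{j=0}^{n-2}|h|^j$ by $\frac1{1-|h|}$ (legitimate since $|h|<1$) produces exactly the asserted bounds $|A_n|<\frac{2^n}{n!}\,|h|^2\cdot\frac1{1-|h|}$ for $|z|<1$ and $|A_n|<\frac{2^n}{n!}\,|z|^{n-2}|h|^2\cdot\frac1{1-|h|}$ for $|z|\ge 1$; the intermediate estimates displayed for $n=3,4,5$ are just this computation specialized, with the crude replacement of $\binom{n}{k}$ by $2^n$ already visible in those lines.

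Finally, summing over $n\ge 3$ and pulling the $n$-independent factor $\frac{|h|^2}{1-|h|}$ out of the sum reduces everything to convergence of $\sum_{n\ge 3}\frac{2^n}{n!}$ (when $|z|<1$) and of $\sum_{n\ge 3}\frac{2^n}{n!}|z|^{n-2}$ (when $|z|\ge 1$), both of which hold for each fixed quaternion $z$ by the ratio test. Denoting the resulting finite sum by $C(z)$, one obtains $\sum_{n=3}^\infty|A_n|\le C(z)\,\frac{|h|^2}{1-|h|}$, whence $\frac1{|h|}\sum_{n=3}^\infty|A_n|\le C(z)\,\frac{|h|}{1-|h|}\to 0$ as $h\to 0$, which is exactly the assertion $\sum_{n=3}^\infty|A_n|=o(h)$. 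There is no serious obstacle here; the only point requiring a little care is the uniformity in $n$ of the geometric-series majorant $\frac1{1-|h|}$, that is, arranging a single bound that holds simultaneously for all degrees so that the series may legitimately be majorized term by term.
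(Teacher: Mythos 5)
Your proof is correct and is essentially the argument the paper intends: the paper offers no separate proof of this lemma, instead displaying the explicit word expansions and term-by-term bounds for $n=2,3,4,5$ followed by ``and so on,'' and your noncommutative multinomial expansion with the count $\binom{n}{k}\le 2^n$, the multiplicativity of the quaternion norm, and the geometric majorant $\frac{1}{1-|h|}$ is precisely the general-$n$ formalization of those displayed computations. The only thing you add beyond the paper is the rigorous uniform-in-$n$ bookkeeping, which is a genuine improvement in completeness but not a different approach.
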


%    Proposition 2.4.
\begin{proposition}\label{p2.4}\em
We have the equality
%    (2-18)
$$
    (e^z)'=e^z.
$$
\end{proposition}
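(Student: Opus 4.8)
The plan is to compute the increment $e^{z^0+h} - e^{z^0}$ termwise using the power series definition of $e^z$ and the expansion \eqref{2-5} for each power $(z+h)^n - z^n$. Concretely, I would write
\[
e^{z^0+h} - e^{z^0} = \sum_{n=1}^\infty \frac{(z^0+h)^n - (z^0)^n}{n!},
\]
and for each $n$ substitute the identity from the Lemma, namely
\[
\frac{(z^0+h)^n - (z^0)^n}{n!} = \frac{1}{n!}\sum_{j=0}^{n-1} (z^0)^{n-1-j}\, h\, (z^0)^{j} + A_n,
\]
where $A_1 = A_2 - \frac{1}{2!}h^2$ bookkeeping aside, the $A_n$ collect all terms containing $h$ to a power $\ge 2$. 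The main linear-in-$h$ part then reads $\sum_k A_k(z^0)\cdot h \cdot B_k(z^0)$ with the pairs $(A_k, B_k)$ running over $\bigl(\tfrac{1}{n!}(z^0)^{n-1-j},\,(z^0)^{j}\bigr)$, and summing the associated products $\sum_k A_k(z^0)B_k(z^0) = \sum_{n=1}^\infty \frac{1}{n!}\sum_{j=0}^{n-1}(z^0)^{n-1} = \sum_{n=1}^\infty \frac{n}{n!}(z^0)^{n-1} = \sum_{n=1}^\infty \frac{(z^0)^{n-1}}{(n-1)!} = e^{z^0}$, exactly as in Proposition \ref{p2.2}.

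Next I would verify the two conditions of Definition \ref{d2.1}. First, finiteness of $\sum_k A_k(z^0)B_k(z^0)$: this is just absolute convergence of $\sum_{n\ge1}\frac{n|z^0|^{n-1}}{n!}$, which holds for every quaternion $z^0$ by the ratio test, so the derivative candidate $e^{z^0}$ is well-defined. Second, the remainder estimate \eqref{2-2}: the full remainder is $\om(z^0,h) = \sum_{n=2}^\infty A_n$ (with the $n=1,2$ contributions being $\frac{1}{2!}h^2$ and so on), and the Lemma already gives $\sum_{n\ge3}|A_n| = o(h)$ for fixed $z^0$, while $|A_2| = \frac{1}{2!}|h|^2 = o(h)$ trivially. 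Hence $|\om(z^0,h)|/|h| \to 0$ as $h\to 0$, which is precisely \eqref{2-2}.

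The one genuinely delicate point — and the step I expect to be the main obstacle — is justifying the interchange of the infinite summation over $n$ with the passage $h\to 0$, i.e. making rigorous that the ``tail'' $\sum_{n\ge 2}A_n$ really is $o(h)$ uniformly enough to conclude. This is exactly what the Lemma was designed to handle: the geometric-type bound $|A_n| < \frac{2^n}{n!}\max(1,|z^0|)^{n-2}|h|^2\cdot\frac{1}{1-|h|}$ for $|h|<1$ gives a convergent majorant $|h|^2\cdot\frac{1}{1-|h|}\sum_{n\ge3}\frac{2^n}{n!}\max(1,|z^0|)^{n-2}$ that is independent of $h$ apart from the factor $|h|^2/(1-|h|) = o(h)$. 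So once the Lemma is invoked, the interchange is legitimate by dominated convergence for series, and no further analytic subtlety arises. I would therefore present the proof as: (i) expand termwise and read off the linear part, (ii) identify $f'(z^0) = e^{z^0}$ and check its finiteness, (iii) collect the remainder and invoke the Lemma to get \eqref{2-2}, concluding $(e^z)' = e^z$.
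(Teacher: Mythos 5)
Your proposal is correct and follows essentially the same route as the paper: expand $e^{z+h}-e^{z}$ termwise via the power series, apply the Lemma to each $\frac{(z+h)^n-z^n}{n!}$ to isolate the linear-in-$h$ part, sum the products $\sum_k A_k B_k=\sum_n \frac{n z^{n-1}}{n!}=e^z$, and use the Lemma's majorant $\sum_{n}|A_n|=o(h)$ for the remainder. The paper merely presents the linear part regrouped by the right-hand factors $h, hz, hz^2,\dots$ (its equation \eqref{2-19}, reused later for the logarithm), and is in fact less explicit than you are about the finiteness of $\sum_k A_kB_k$ and the interchange of summation with $h\to 0$.
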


%    Proof.
\begin{proof}
The equality $$ e^z =1+z+\frac{z^2}{2!}+\frac{z^3}{3!}+\cdots$$ implies that,
for any $h \in \bQ$,

$$e^{z+h}-z^z=h+\frac{(z+h)^2-z^2}{2!}+\frac{(z+h)^3-z^3}{3!}+\frac{(z+h)^4-z^4}{4!}+\cdots\;,$$
and applying Lemma 2.3 to the right-hand side of this equality, we obtain
\begin{align*}
    e^{z+h}-e^z =h &+ \frac{1}{2!}\,(zh+hz) \\
    & +\frac{1}{3!}\,(z^2h+zhz+hz^2) \\
    & +\frac{1}{4!}\,(z^3h+z^2hz+zhz^2+hz^3) \\
    & +\cdots+o(h).
\end{align*}
Therefore
%    (2-19)
\begin{multline}\label{2-19}
    e^{z+h}-e^z=\left(1+\frac{z}{2!}+\frac{z^2}{3!}+\cdots\right)h \\
    +\left(\frac{1}{2!}+\frac{z}{3!}+\frac{z^2}{4!}+\cdots\right)hz+
        \left(\frac{1}{3!}+\frac{z}{4!}+\frac{z^2}{5!}+\cdots\right)hz^2\\
        +\cdots+o(h)
\end{multline}
and hence
\begin{align*}
    (e^z)'=1+\frac{z}{2!}+\frac{z^2}{3!}+\frac{z^3}{4!}+\cdots  & \\
    +\frac{z}{2!}+\frac{z^2}{3!}+\frac{z^3}{4!}+\cdots & \\
    + \frac{z^2}{3!}+\frac{z^3}{4!}+\cdots & \\
    + \frac{z^3}{4!}+\cdots &  \\
    =1+2\,\frac{z}{2!}+3\,\frac{z^2}{3!}+4\,\frac{z^3}{4!}+\cdots & \\
    =1+\,\frac{z}{1!}+\frac{z^2}{2!}+\frac{z^3}{3!}+\cdots & = e^z. %%\qedhere
\end{align*}
\end{proof}

%    Proposition 2.5.
\begin{proposition}\label{p2.5}\em
The equality
%    (2-20)
$$
    (\sin z)'=\cos z
$$
is valid.
\end{proposition}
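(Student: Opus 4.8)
The plan is to mimic the proof of Proposition \ref{p2.4}, with the sine series in place of the exponential series. First I would write, for any $h \in \bQ$,
$$\sin(z+h)-\sin z=\sum_{m=0}^{\infty}\frac{(-1)^m}{(2m+1)!}\bigl[(z+h)^{2m+1}-z^{2m+1}\bigr],$$
and apply Lemma 2.3 to each bracketed difference with $2m+1\ge 3$ (the term $m=0$ reducing to $(z+h)-z=h$). Thus $(z+h)^{2m+1}-z^{2m+1}$ equals $\sum_{j=0}^{2m} z^{j}h\,z^{2m-j}$ plus a remainder whose norm is dominated by the corresponding $A_{2m+1}$; since the $A_n$'s (with their alternating signs) form a subseries of the series estimated in Lemma 2.3, their total is $o(h)$, and the sign pattern does not affect the norm bound.

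Next I would regroup the result by collecting, for each fixed exponent $p\ge 0$, all summands of the form $(\cdot)\,h\,z^{p}$. A term $z^{j}h\,z^{2m-j}$ lands in this group exactly when $2m-j=p$, i.e. $j=2m-p$, which is admissible precisely when $m\ge\lceil p/2\rceil$; its left factor is then $z^{2m-p}$. This gives the representation
$$\sin(z+h)-\sin z=\sum_{p=0}^{\infty}\Biggl(\sum_{m\ge\lceil p/2\rceil}\frac{(-1)^m}{(2m+1)!}\,z^{2m-p}\Biggr)h\,z^{p}+o(h),$$
which is of the form \eqref{2-1}. Applying \eqref{2-3} — deleting the central $h$ and multiplying the left and right factors — yields
$$(\sin z)'=\sum_{p=0}^{\infty}\sum_{m\ge\lceil p/2\rceil}\frac{(-1)^m}{(2m+1)!}\,z^{2m-p}z^{p}=\sum_{p=0}^{\infty}\sum_{m\ge\lceil p/2\rceil}\frac{(-1)^m}{(2m+1)!}\,z^{2m}.$$

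Finally I would interchange the order of summation: for fixed $m$ the index $p$ runs over $0,1,\dots,2m$, that is over exactly $2m+1$ values (since $\lceil p/2\rceil\le m$ iff $p\le 2m$), so the coefficient of $z^{2m}$ becomes $(2m+1)\cdot\frac{(-1)^m}{(2m+1)!}=\frac{(-1)^m}{(2m)!}$, whence
$$(\sin z)'=\sum_{m=0}^{\infty}\frac{(-1)^m}{(2m)!}\,z^{2m}=1-\frac{z^2}{2!}+\frac{z^4}{4!}-\cdots=\cos z.$$
The only genuinely delicate point is the legitimacy of the rearrangements of the double series — both the grouping of the $o(h)$-part and the interchange of summations in the formula for the derivative. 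This is handled as in Lemma 2.3 and Proposition \ref{p2.4}: for a fixed finite quaternion $z$ the majorant series $\sum_n 2^n|z|^{n-2}/n!$ converges, so everything in sight is absolutely convergent and the manipulations are justified. Tracking the alternating signs through the regrouping is the main bookkeeping hazard, but no idea beyond the $e^z$ case is needed.
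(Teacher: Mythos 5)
Your proposal is correct and follows essentially the same route as the paper's proof: expand $\sin(z+h)-\sin z$ term by term, control the remainders via Lemma 2.3, regroup by the right-hand factor $z^p$ into the form \eqref{2-1}, and then count that each $\frac{(-1)^m}{(2m+1)!}z^{2m}$ occurs $2m+1$ times to recover $\cos z$. Your version merely makes explicit the index bookkeeping and the interchange of summation that the paper carries out informally with ellipses.
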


%    Proof.
\begin{proof}
\allowdisplaybreaks[0]
\begin{align*}
    \sin (z+h)& -\sin z\\
    & =(z+h)-\frac{(z+h)^3}{3!}+\frac{(z+h)^5}{5!}-\cdots -z+\frac{z^3}{3!}-\frac{z^5}{5!}+\cdots \\
    & =h-\frac{(z+h)^3-z^3}{3!}+\frac{(z+h)^5-z^5}{5!}- \cdots \\
    & =h-\frac{1}{3!}\,(z^2h+zhz+hz^2)\\
    & \quad +\frac{1}{5!}\,(z^4h+z^3hz+z^2hz^2+zhz^3+hz^4)+\cdots+o(h).
\end{align*}
Therefore
%    (2-21)
\begin{multline*}%\label{2-21}
    \sin(z+h)-\sin z=h+\left(-\frac{z^2}{3!}+\frac{z^4}{5!}\right)h \\
        +zh\left(-\frac{z}{3!}+\frac{z^3}{5!}\right)+h\left(-\frac{z^2}{3!}+\frac{z^4}{5!}\right)+\cdots+o(h).
\end{multline*}
Hence
\allowdisplaybreaks
\begin{align*}
    (\sin z)' & =1-\frac{z^2}{3!}+\frac{z^4}{5!}+z\left(-\frac{z}{3!}+\frac{z^3}{5!}\right)-\frac{z^2}{3!}+\frac{z^4}{5!}+\cdots\\
    & =1-\frac{z^2}{3!}+\frac{z^4}{5!}-\frac{z^2}{3!}+\frac{z^4}{5!}-\frac{z^2}{3!}+\frac{z^4}{5!}+\cdots\\
    & =1-\frac{z^2}{2!}+\frac{z^4}{4!}-\cdots=\cos z. %%\qedhere
\end{align*}
\end{proof}

    Similarly, one has

%    Proposition 2.6.
\begin{proposition}\label{p2.6}\em
The equality
%    (2-22)
$$
    (\cos z)'=-\sin z
$$
is valid.
\end{proposition}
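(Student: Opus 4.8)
The plan is to mimic verbatim the proof of Proposition \ref{p2.5}, replacing $\sin$ by $\cos$ throughout. Starting from Hamilton's power series $\cos z = 1 - \frac{z^2}{2!} + \frac{z^4}{4!} - \cdots$, I would write
\[
\cos(z+h)-\cos z = -\frac{(z+h)^2-z^2}{2!}+\frac{(z+h)^4-z^4}{4!}-\frac{(z+h)^6-z^6}{6!}+\cdots,
\]
since the constant term cancels. Then I would apply Lemma 2.3 to each difference quotient $\frac{(z+h)^{2m}-z^{2m}}{(2m)!}$, which turns it into $\frac{1}{(2m)!}(z^{2m-1}h+z^{2m-2}hz+\cdots+hz^{2m-1})$ plus an $A_{2m}$ term, and the sum $\sum A_{2m}$ is $o(h)$ by the estimates collected in the Lemma.

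Next I would regroup the resulting finite-plus-$o(h)$ expression by collecting, for each power $z^j$, all the terms of the form $(\cdots)\, h z^j$, exactly as in \eqref{2-19} and in the display following the proof of Proposition \ref{p2.5}. This yields
\[
\cos(z+h)-\cos z=\Bigl(-\frac{z}{2!}+\frac{z^3}{4!}-\cdots\Bigr)h+h\Bigl(-\frac{1}{2!}+\frac{z^2}{4!}-\cdots\Bigr)z\cdot(\text{etc.})+\cdots+o(h),
\]
so that by \eqref{2-3} the $\bQ$-derivative is obtained by formally replacing $h$ by $1$ and summing. Finally I would perform the bookkeeping of coefficients: the coefficient of $z^{2m-1}$ in $(\cos z)'$ receives $2m$ equal contributions of $\frac{(-1)^m}{(2m)!}$, giving $2m\cdot\frac{(-1)^m}{(2m)!}=\frac{(-1)^m}{(2m-1)!}$, which is precisely the coefficient of $z^{2m-1}$ in $-\sin z = -z+\frac{z^3}{3!}-\cdots$. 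Hence $(\cos z)'=-\sin z$.

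The routine obstacle is purely notational: keeping the signs straight, since here \emph{all} surviving Taylor terms carry the alternating sign $(-1)^m$ starting already at the quadratic term (whereas for $\sin$ the linear term $h$ is separate and sign-positive). The one genuine point that must be invoked rather than recomputed is that the tail $\sum_{m\ge 1} A_{2m}$ is $o(h)$ for every fixed finite $z$; this is immediate from the Lemma, whose estimate $|A_n|< \frac{2^n}{n!}|z|^{\max(n-2,0)}|h|^2\cdot\frac{1}{1-|h|}$ gives a convergent majorant after summing over the even indices. Since no step uses anything beyond the Banach-algebra structure, the argument is, as Remark \ref{r1.1} notes, valid in that generality as well; I would simply write ``the proof is identical to that of Proposition \ref{p2.5}'' and display the coefficient computation.
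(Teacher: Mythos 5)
Your proposal is correct and is exactly what the paper intends: Proposition \ref{p2.6} is stated with only the remark ``Similarly, one has,'' i.e.\ the proof is the verbatim adaptation of Proposition \ref{p2.5} that you carry out, and your coefficient check $2m\cdot\frac{(-1)^m}{(2m)!}=\frac{(-1)^m}{(2m-1)!}$ matching $-\sin z$ is the whole content. No discrepancy with the paper's approach.
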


%             3.
\section{Rules for calculating $\bQ$-derivatives}

    The rules for calculating $\bQ$-derivatives are identical to those derived in a standard calculus course.

\begin{proposition}\label{p3.1}\em
Let $f$ and $\vf$ be two functions defined on a neighborhood of $z^0 \in \bQ$.
If both $f$ and $\vf$ are $\bQ$-differentiable at $z^0$, then
\begin{enumerate}
  \item  both $c f$ and $f c$ are $\bQ$-differentiable at $z^0$ for all $c \in \bQ$ and $(c f)'(z^0)=c f'(z^0)$ and $(fc)'(z^0)=f'(z^0) c$;
  \item  $f+\vf$ is $\bQ$-differentiable at $z^0$ and $(f+\vf)'(z^0)=f'(z^0)+\vf'(z^0)$;
  \item $f \vf$ is $\bQ$-differentiable at $z^0$ and $(f \vf)'(z^0)=f'(z^0)\vf(z^0)+f(z)\vf'(z^0)$.
\end{enumerate}
\end{proposition}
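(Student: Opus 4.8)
The plan is to verify each of the three claims directly from Definition \ref{d2.1}, exploiting the fact that the defining relation \eqref{2-1} is an equation between quaternions and that $o(h)$ terms are closed under left/right multiplication by fixed quaternions and under addition. For part (1), I would start from the representation $f(z^0+h)-f(z^0)=\sum_k A_k(z^0)\,h\,B_k(z^0)+o(h)$ and multiply on the left by $c$, obtaining $cf(z^0+h)-cf(z^0)=\sum_k (cA_k(z^0))\,h\,B_k(z^0)+c\cdot o(h)$; since $|c\cdot o(h)|\le |c|\,|o(h)|$, the last term is still $o(h)$, and the new coefficient sequences $cA_k(z^0),\,B_k(z^0)$ give $\sum_k cA_k(z^0)B_k(z^0)=c\sum_k A_k(z^0)B_k(z^0)=cf'(z^0)$, which is finite. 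The argument for $fc$ is symmetric, multiplying on the right and using the sequences $A_k(z^0),\,B_k(z^0)c$.

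For part (2), I would simply add the two representations for $f$ and $\vf$: if $f(z^0+h)-f(z^0)=\sum_k A_k h B_k+o(h)$ and $\vf(z^0+h)-\vf(z^0)=\sum_j C_j h D_j+o(h)$, then the concatenated family of pairs $(A_k,B_k)$ together with $(C_j,D_j)$ furnishes a representation for $f+\vf$ whose associated sum of products is $f'(z^0)+\vf'(z^0)$, again finite, and the two $o(h)$ terms add to an $o(h)$ term.

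For part (3), the product rule, I would write the standard splitting
\begin{align*}
f(z^0+h)\vf(z^0+h)-f(z^0)\vf(z^0)
&=\bigl(f(z^0+h)-f(z^0)\bigr)\vf(z^0)\\
&\quad+f(z^0)\bigl(\vf(z^0+h)-\vf(z^0)\bigr)\\
&\quad+\bigl(f(z^0+h)-f(z^0)\bigr)\bigl(\vf(z^0+h)-\vf(z^0)\bigr),
\end{align*}
then substitute the representations from Definition \ref{d2.1} into the first two groups. The first group becomes $\sum_k A_k(z^0)\,h\,(B_k(z^0)\vf(z^0))+o(h)$ and the second becomes $\sum_j (f(z^0)C_j(z^0))\,h\,D_j(z^0)+o(h)$, so the combined coefficient families yield $\bigl(\sum_k A_k B_k\bigr)\vf(z^0)+f(z^0)\bigl(\sum_j C_j D_j\bigr)=f'(z^0)\vf(z^0)+f(z^0)\vf'(z^0)$.

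The main obstacle is the cross term $\bigl(f(z^0+h)-f(z^0)\bigr)\bigl(\vf(z^0+h)-\vf(z^0)\bigr)$: I must show it is $o(h)$. Using $\bQ$-differentiability, each factor is $O(|h|)$ — indeed $|f(z^0+h)-f(z^0)|\le\bigl(\sum_k|A_k(z^0)|\,|B_k(z^0)|\bigr)|h|+|o(h)|$, and similarly for $\vf$ (here I would note that finiteness of $\sum_k A_k B_k$ does not literally give finiteness of $\sum_k|A_k|\,|B_k|$, so I would instead argue more carefully that $h\mapsto f(z^0+h)-f(z^0)$ is $O(|h|)$ by continuity-type reasoning, or simply restrict to representations with absolutely convergent coefficient sums, as is the case for all functions treated in this paper). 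Granting each factor is $O(|h|)$, their product is $O(|h|^2)=o(h)$, which closes the argument. I would also remark that $f\vf$ inherits a valid coefficient representation with finite sum of products, so it is genuinely $\bQ$-differentiable and not merely possessed of a candidate derivative.
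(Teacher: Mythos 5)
Your proposal is correct and follows essentially the same route as the paper: part (2) by concatenating the two coefficient families, and part (3) by the standard splitting of the product increment, where your symmetric three-term decomposition is exactly what the paper's two-term version $[f(z^0+h)-f(z^0)]\vf(z^0+h)+f(z^0)[\vf(z^0+h)-\vf(z^0)]$ becomes once $\vf(z^0+h)$ is expanded via its own representation. The one place you go beyond the paper is in explicitly justifying that the cross term $[f(z^0+h)-f(z^0)][\vf(z^0+h)-\vf(z^0)]$ is $o(h)$ --- the paper silently absorbs it into $o(h)$ --- and your worry about $\sum_k|A_k|\,|B_k|$ can be settled by noting that $h\mapsto\sum_k A_k h B_k$, being a pointwise limit of $\bR$-linear maps on the four-dimensional space $\bQ$, is itself linear and hence bounded, so each increment is $O(|h|)$.
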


%    Proof.
\begin{proof}The proof of 1. is obvious.

Since $f$ and $\vf$ are $\bQ$-differentiable at $z^0$, there are representations
\begin{align*}
    f(z^0+h)-f(z^0) & =\sum_k A_k hB_k+o(h), \\
    \vf(z^0+h)-\vf(z^0) & =\sum_k C_k hD_k+o(h).
\end{align*}
Then
\begin{align*}
    (f+\vf)(z^0+h)-(f+\vf)(z^0) & =[f(z^0+h)-f(z^0)]+[\vf(z^0+h)-\vf(z^0)] \\
    & =    \sum_k A_k hB_k+\sum_k C_khD_k+ o(h),
\end{align*}
and hence
$$
    (f+\vf)'(z^0)= \sum_k A_k B_k+\sum_k C_k D_k=f'(z^0)+\vf'(z^0).
$$ This proves 2.

Next, since
\begin{align*}
    f(z^0+h)& \vf(z^0+h)-f(z^0)\vf(z^0)= \\
    & =[f(z^0+h)-f(z^0)]\vf(z^0+h)+f(z^0)[\vf(z^0+h)-\vf(z^0)] \\
    & =\bigg[\sum_k A_k h B_k+o(h)\bigg]\vf(z^0+h)+f(z^0)\bigg[ \sum_k C_k h D_k+o(h)\bigg] \\
    & =\bigg[\sum_k A_k h B_k+o(h)\bigg]\cdot \bigg[\vf(z^0)+ \sum_k C_k h D_k+o(h)\bigg]\\
    & \quad +f(z^0)\bigg[ \sum_k C_k h D_k+o(h)\bigg] \\
    & =\bigg( \sum_k A_k h B_k\bigg)\vf(z^0)+f(z^0) \sum_k C_k h D_k+o(h),
\end{align*}
it follows that
\[
    (f \vf)'(z^0)=\bigg( \sum_k A_k  B_k\bigg)\vf(z^0)+f(z^0) \sum_k C_k  D_k= f'(z^0)\vf(z^0)+f(z^0)\vf'(z^0),
\] proving 3.
\end{proof}

The following two corollaries are immediate.

\begin{corollary}\label{c3.3}\em
If $f_1,f_2,\dots,f_n$ are $\bQ$-differentiable functions at a point $z^0$, then
their product $f_1 f_2\cdots f_n$ is also $\bQ$-differentiable at $z^0$ and we have:
\begin{align*}
(f_1 f_2\cdots f_n)'(z^0)
&=f_1'(z^0) f_2(z^0) \cdots f_n(z^0)+f_1(z^0) f_2'(z^0) f_3(z^0)\cdots  f_n(z^0)+\\
&\cdots+f_1(z^0) \cdots f_{n-1}(z^0) f_n'(z^0).
\end{align*}
\end{corollary}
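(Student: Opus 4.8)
The plan is to prove Corollary \ref{c3.3} by induction on $n$, using Proposition \ref{p3.1} part 3 (the product rule) as the base case and inductive step. For $n=1$ the statement is trivial, and for $n=2$ it is exactly Proposition \ref{p3.1}(3). Assuming the formula holds for $n=k$, I would write $f_1 f_2 \cdots f_{k+1} = (f_1 f_2 \cdots f_k) \cdot f_{k+1}$, observe that $f_1 f_2 \cdots f_k$ is $\bQ$-differentiable at $z^0$ by the inductive hypothesis, and apply the two-factor product rule to the pair $(f_1 \cdots f_k)$ and $f_{k+1}$. This gives
\[
(f_1 \cdots f_{k+1})'(z^0) = (f_1 \cdots f_k)'(z^0)\, f_{k+1}(z^0) + (f_1 \cdots f_k)(z^0)\, f_{k+1}'(z^0).
\]

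Next I would substitute the inductive formula for $(f_1 \cdots f_k)'(z^0)$ into the first term. Each of the $k$ summands there is of the form $f_1(z^0) \cdots f_j'(z^0) \cdots f_k(z^0)$, and multiplying on the right by $f_{k+1}(z^0)$ produces exactly the first $k$ terms of the desired formula for $n=k+1$. The second term, $f_1(z^0) \cdots f_k(z^0)\, f_{k+1}'(z^0)$, is precisely the last ($(k+1)$-th) term. Combining, we recover the claimed expression with $n$ replaced by $k+1$, completing the induction. The fact that $\bQ$ is noncommutative is harmless here because the product rule of Proposition \ref{p3.1}(3) already respects the order of factors, so throughout the argument one must only be careful to keep the factors $f_1, \dots, f_n$ in their original left-to-right order and never commute anything past $f_j'(z^0)$.

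There is essentially no real obstacle; the only point requiring a moment's care is bookkeeping of the term positions when expanding $(f_1 \cdots f_k)'(z^0) f_{k+1}(z^0)$ and verifying that it matches the first $k$ terms of the target sum, and simultaneously that $(f_1 \cdots f_k)(z^0)$ is literally $f_1(z^0) \cdots f_k(z^0)$ as an ordered product of quaternions. Both are immediate. I would present the proof in two or three lines, invoking Proposition \ref{p3.1} explicitly and stating that the induction is then routine. The analogous statement for Banach algebras noted in Remark \ref{r1.1} follows verbatim, since only associativity and the already-established product rule are used.
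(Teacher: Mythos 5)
Your proof is correct and is exactly the argument the paper intends: the paper simply declares Corollary \ref{c3.3} ``immediate'' from the product rule of Proposition \ref{p3.1}(3), and your induction on $n$ is the standard way of spelling that out. No discrepancy.
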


%Corollary 3.4.
\begin{corollary}\label{c3.4}\em
If a function $f$ is $\bQ$-differentiable at a point $z^0$, then $f^n$ is also $\bQ$-differentiable at $z^0$ for all $n=1,2,\dots$
and we have:
$$
    (f^n)'(z^0)=f'(z^0) f^{n-1}(z^0)+f(z^0) f'(z^0) f^{n-2}(z^0)+\cdots +f^{n-1}(z^0) f'(z^0).
$$
\end{corollary}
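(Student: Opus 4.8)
The plan is to derive Corollary~\ref{c3.4} directly from Corollary~\ref{c3.3} by taking all the factors equal. Since $f$ is $\bQ$-differentiable at $z^0$, so is the finite list $f_1=f_2=\cdots=f_n=f$; applying Corollary~\ref{c3.3} to this list immediately yields that $f^n=f_1f_2\cdots f_n$ is $\bQ$-differentiable at $z^0$, together with the product-rule expansion. In each summand of that expansion exactly one factor is replaced by $f'(z^0)$ and the remaining $n-1$ factors are $f(z^0)$; collecting the $f(z^0)$'s that sit to the left of the $f'(z^0)$ as $f^{j}(z^0)$ and those to the right as $f^{n-1-j}(z^0)$, we obtain
\[
    (f^n)'(z^0)=\sum_{j=0}^{n-1} f^{j}(z^0)\, f'(z^0)\, f^{n-1-j}(z^0),
\]
which is precisely the asserted formula written out term by term. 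Note that one must keep the factors in order and not commute them, since $f(z^0)$ and $f'(z^0)$ are quaternions and need not commute; this is the only point that requires any care, and it is handled automatically because Corollary~\ref{c3.3} already records the factors in their original left-to-right order.

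Alternatively, and perhaps more in keeping with the inductive style of the rest of the section, one can prove the statement directly by induction on $n$ using part~3 of Proposition~\ref{p3.1}. The base case $n=1$ is trivial. For the inductive step, write $f^{n+1}=f\cdot f^{n}$ and apply the product rule: $(f^{n+1})'(z^0)=f'(z^0) f^{n}(z^0)+f(z^0) (f^{n})'(z^0)$. Substituting the inductive hypothesis for $(f^{n})'(z^0)$ and using associativity to absorb the leading $f(z^0)$ into each term of the resulting sum gives $f'(z^0)f^{n}(z^0)+\sum_{j=0}^{n-1} f^{j+1}(z^0) f'(z^0) f^{n-1-j}(z^0)$, and reindexing shows this is the claimed formula for $n+1$.

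I expect no real obstacle here: both routes are short and mechanical, and the only thing to watch is non-commutativity, which forces us to track the position of the single $f'(z^0)$ factor within each product rather than simply writing $n f^{n-1}(z^0) f'(z^0)$. I would present the first (non-inductive) argument, since it makes the cleanest use of Corollary~\ref{c3.3} and avoids repeating an induction.
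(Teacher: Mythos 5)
Your proposal is correct and matches the paper's route: the paper states Corollary \ref{c3.4} as an immediate specialization of Corollary \ref{c3.3} with $f_1=\cdots=f_n=f$, giving no further argument. Your first derivation is exactly this, and your care about non-commutativity (tracking the position of the single $f'(z^0)$ factor) is the right point to emphasize.
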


%    Proposition 3.5.
\begin{proposition}\label{p3.5}\em
If a function $\vf$ is $\bQ$-differentiable at a point $z^0$ and if $\vf\neq 0$ in a neighborhood of
$z^0$, then\footnote{For each quaternion
$q\neq 0$, there is a (unique) quaternion $\frac{1}{q}$, called the \emph{inverse} of $q$, for which $q\cdot \frac{1}{q}=1=\frac{1}{q}\cdot q$.
The inverse of $q$ is sometimes
denoted by the symbol $q^{-1}$.} $\frac{1}{\vf}$ is also $\bQ$-differentiable at $z^0$ and we have:
%    (3-5)
$$
    \left(\frac{1}{\vf}\right)'(z^0)=-\frac{1}{\vf(z^0)}\cdot \vf'(z^0) \cdot \frac{1}{\vf(z^0)}.
$$
\end{proposition}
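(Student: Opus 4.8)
The plan is to imitate the classical proof of the reciprocal rule, built on the algebraic identity
\[
 a^{-1}-b^{-1}=b^{-1}(b-a)a^{-1},
\]
valid for any two invertible elements $a,b$ of an associative unital algebra, combined with the $\bQ$-differentiability of $\vf$ and the continuity of the inversion map.

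I would first record two consequences of $\bQ$-differentiability. By \eqref{2-1}--\eqref{2-2}, every $\bQ$-differentiable function is continuous at the point in question, since $f(z^0+h)-f(z^0)\to 0$ as $h\to 0$; in particular $\vf$ is continuous at $z^0$. As $\vf\neq 0$ on a neighborhood of $z^0$ and $q\mapsto \frac1q$ is continuous on $\bQ\setminus\{0\}$, the function $\psi(h):=\frac{1}{\vf(z^0+h)}-\frac{1}{\vf(z^0)}$ is well defined for $h$ small and satisfies $\psi(h)\to 0$ as $h\to 0$. Secondly, again from \eqref{2-1}--\eqref{2-2}, the ratio $|\vf(z^0+h)-\vf(z^0)|/|h|$ stays bounded as $h\to 0$.

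Writing $\vf(z^0+h)-\vf(z^0)=\sum_k C_k h D_k+o(h)$ with $\vf'(z^0)=\sum_k C_k D_k$, I substitute this and $\frac{1}{\vf(z^0+h)}=\frac{1}{\vf(z^0)}+\psi(h)$ into the identity above, taken with $a=\vf(z^0+h)$, $b=\vf(z^0)$:
\[
\frac{1}{\vf(z^0+h)}-\frac{1}{\vf(z^0)}=-\frac{1}{\vf(z^0)}\,[\vf(z^0+h)-\vf(z^0)]\,\frac{1}{\vf(z^0+h)}.
\]
Expanding the right-hand side splits it into the main term
\[
\sum_k\Bigl(-\tfrac{1}{\vf(z^0)}C_k\Bigr)\, h\,\Bigl(D_k\tfrac{1}{\vf(z^0)}\Bigr)
\]
plus a remainder equal to $-\frac{1}{\vf(z^0)}\,o(h)\,\frac{1}{\vf(z^0)}-\frac{1}{\vf(z^0)}\,[\vf(z^0+h)-\vf(z^0)]\,\psi(h)$. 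The first remainder term is $o(h)$ because the norm on $\bQ$ is submultiplicative; the second is $o(h)$ because, after dividing its norm by $|h|$, it is bounded by $\bigl|\tfrac{1}{\vf(z^0)}\bigr|\cdot\bigl(|\vf(z^0+h)-\vf(z^0)|/|h|\bigr)\cdot|\psi(h)|$, a bounded quantity times $|\psi(h)|\to 0$. Hence the remainder is $o(h)$. Taking $A_k'=-\frac{1}{\vf(z^0)}C_k$ and $B_k'=D_k\frac{1}{\vf(z^0)}$, one has $\sum_k A_k'B_k'=-\frac{1}{\vf(z^0)}\bigl(\sum_k C_kD_k\bigr)\frac{1}{\vf(z^0)}=-\frac{1}{\vf(z^0)}\,\vf'(z^0)\,\frac{1}{\vf(z^0)}$, which is finite, and Definition~\ref{d2.1} then yields both the $\bQ$-differentiability of $\frac{1}{\vf}$ at $z^0$ and the stated formula.

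The only non-formal point — and the step I expect to be the main obstacle — is the estimate of the remainder term $-\frac{1}{\vf(z^0)}\,[\vf(z^0+h)-\vf(z^0)]\,\psi(h)$, which needs simultaneously that $\psi(h)\to 0$ (hence continuity of $\vf$ and, crucially, of inversion; this is precisely where one must know $\vf$ is invertible on a whole neighborhood, the caveat flagged in Remark~\ref{r1.1} for the Banach-algebra version) and that the difference quotient of $\vf$ stays bounded near $z^0$. Both facts follow at once from $\bQ$-differentiability, but it is cleanest to isolate them before starting the computation.
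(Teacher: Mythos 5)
Your proof is correct and follows essentially the same route as the paper's: both rest on an algebraic identity for the difference of inverses that isolates the main term $-\frac{1}{\vf(z^0)}\,[\vf(z^0+h)-\vf(z^0)]\,\frac{1}{\vf(z^0)}$ and leaves a remainder quadratic in the increment of $\vf$, hence $o(h)$. The only difference is presentational: you use the first-order identity $a^{-1}-b^{-1}=b^{-1}(b-a)a^{-1}$ together with continuity of inversion, whereas the paper starts from an equivalent pre-expanded second-order identity for $q_1^{-1}-q_2^{-1}$; your version also spells out the boundedness of the difference quotient and of $\frac{1}{\vf(z^0+h)}$ near $h=0$, which the paper's remainder estimate uses but leaves implicit.
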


%    Proof.
\begin{proof}
We first observe that for any two nonzero quaternions $q_1$ and $q_2$, the following equality
$$q_1^{-1}-q_2^{-1}=q_1^{-1}(q_1 -q_2)q_2^{-1}(q_1-q_2)q_2^{-1}-q_2^{-1}(q_1 -q_2)q_2^{-1}$$
holds. Indeed, using that $q_1^{-1}$ is the inverse of $q_1$ and $q_2^{-1}$ is the inverse of $q_2$, we obtain
\begin{align*}
q_1^{-1}(q_1 -q_2)&q_2^{-1}(q_1-q_2)q_2^{-1}-q_2^{-1}(q_1 -q_2)q_2^{-1}=\\
&=(1-q_1^{-1}q_2)q_2^{-1}(q_1q_2^{-1}-1)-(q_2^{-1}q_1-1)q_2^{-1}\\
&=(q_2^{-1}-q_1^{-1})(q_1q_2^{-1}-1)-(q_2^{-1}q_1 q_2^{-1}-q_2^{-1})\\
&=(q_2^{-1}q_1 q_2^{-1}-q_2^{-1} -q_2^{-1}+q_1^{-1})-(q_2^{-1}q_1 q_2^{-1}-q_2^{-1}),
\end{align*} as needed.

Putting $\vf(z^0+h)$ and $\vf(z^0)$ in the equality, we obtain
\begin{multline*}
    \frac{1}{\vf(z^0+h)}-\frac{1}{\vf(z^0)}=\\
    = \left\{ -\frac{1}{\vf(z^0)}+\frac{1}{\vf(z^0+h)}\,[\vf(z^0+h)-\vf(z^0)]\,\frac{1}{\vf(z^0)}\right\} \cdot
        [\vf(z^0+h)-\vf(z^0)]\,\frac{1}{\vf(z^0)}\,.
\end{multline*}
We now calculate
\begin{multline*}
    \frac{1}{\vf(z^0+h)}-\frac{1}{\vf(z^0)} = -\frac{1}{\vf(z^0)}\,[\vf(z^0+h)-\vf(z^0)]\,\frac{1}{\vf(z^0)}\\
    + \frac{1}{\vf(z^0+h)}\,[\vf(z^0+h)-\vf(z^0)]\,\frac{1}{\vf(z^0)}\, [\vf(z^0+h)-\vf(z^0)]\,\frac{1}{\vf(z^0)} \\
    = -\frac{1}{\vf(z^0)}\,\left[ \sum C_k h D_k+o(h) \right] \frac{1}{\vf(z^0)}+o(h) \\
    = -\frac{1}{\vf(z^0)}\,\left[ \sum C_k h D_k \right] \frac{1}{\vf(z^0)}+o(h).
\end{multline*}
Hence
\[
    \left(\frac{1}{\vf}\right)'(z^0)=-\frac{1}{\vf(z^0)}\,\left[ \sum C_k  D_k \right] \frac{1}{\vf(z^0)}=
        -\frac{1}{\vf(z^0)}\cdot\vf'(z^0) \cdot \frac{1}{\vf(z^0)}\,. %%\qedhere
\]
\end{proof}

%    Corollary 3.6.
\begin{corollary}\label{c3.6}\em
For $z\neq 0$ we have:
$$
    (z^m)'=mz^{m-1}, \quad m=-1,-2,\dots\,.
$$
\end{corollary}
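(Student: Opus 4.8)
The plan is to derive the formula entirely from the rules already in hand, since for $z\neq 0$ the negative powers are assembled from the identity function $z\mapsto z$ and the reciprocal operation, both of which are now under control. Throughout I fix a point $z^0\in\bQ$ with $z^0\neq 0$ and write $m=-n$ with $n\geq 1$; the assertion is the value of $(z^m)'$ at this $z^0$.

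First I would dispose of the case $n=1$. The identity function $\vf(z)=z$ is $\bQ$-differentiable with $\vf'(z^0)=1$ by Proposition~\ref{p2.2} (the case $n=1$), and $\vf(z)=z$ is nonzero on a whole neighbourhood of $z^0$. Hence Proposition~\ref{p3.5} applies and gives
$$
    (z^{-1})'(z^0)=-\frac{1}{z^0}\cdot 1\cdot\frac{1}{z^0}=-\bigl(z^0\bigr)^{-2}=(-1)\,\bigl(z^0\bigr)^{-1-1},
$$
which is exactly the claimed identity for $m=-1$.

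For general $n\geq 2$ I would apply Proposition~\ref{p3.5} to $\vf(z)=z^n$. This $\vf$ is $\bQ$-differentiable at $z^0$ with $\vf'(z^0)=n\,(z^0)^{n-1}$ by Proposition~\ref{p2.2}, and, because the quaternion norm is multiplicative, $|z^n|=|z|^n$, so $z^n$ is nonzero on a neighbourhood of $z^0\neq 0$; thus the hypotheses of Proposition~\ref{p3.5} are satisfied. It yields
$$
    (z^{-n})'(z^0)=-\frac{1}{(z^0)^n}\cdot n\,(z^0)^{n-1}\cdot\frac{1}{(z^0)^n}.
$$
The only remaining point is to simplify the right-hand side: every factor here is a power of the single quaternion $z^0$, and powers of a fixed quaternion commute with one another (since $z^az^b=z^{a+b}=z^bz^a$, including for negative exponents by associativity), so the triple product collapses to $-n\,(z^0)^{-n-1}=m\,(z^0)^{m-1}$. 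Alternatively, one may write $z^{-n}=(z^{-1})^n$ and invoke Corollary~\ref{c3.4} together with the already proved $m=-1$ case: each of the $n$ summands produced by that corollary is again a product of powers of $z^0$, hence equals $-(z^0)^{-n-1}$, and their sum gives the same conclusion.

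There is essentially no hard step here. The two things to be careful about are verifying the non-vanishing hypothesis of Proposition~\ref{p3.5} (handled by multiplicativity of $|\cdot|$ together with continuity) and the bookkeeping of exponents when collapsing the triple product, which is legitimate precisely because one is multiplying powers of one and the same quaternion.
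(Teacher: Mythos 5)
Your proof is correct and is essentially the paper's own argument: write $m=-n$, apply Proposition~\ref{p3.5} to $z^n$ using Proposition~\ref{p2.2} for $(z^n)'$, and collapse the product $-z^{-n}\,nz^{n-1}\,z^{-n}$ to $mz^{m-1}$ since powers of a single quaternion commute. The additional care you take over the non-vanishing hypothesis and the exponent bookkeeping is detail the paper leaves implicit.
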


%    Proof.
\begin{proof} Putting $n=-m$ and using Propositions \ref{p2.2} and \ref{p3.5}, we obtain

$$
    (z^m)'=\left(\frac{1}{z^n}\right)'=-\frac{1}{z^n}\,(z^n)'\,\frac{1}{z^n} =-\frac{1}{z^n} \, nz^{n-1}\,\frac{1}{z^n}=
        -nz^{-n-1}=mz^{m-1}\,.
$$

\end{proof}

%    Corollary 3.7.
\begin{corollary}\label{c3.7}\em
For an arbitrary constant $c $, we have:
$$
    \left(\frac{1}{c-z}\right)'=\frac{1}{(c-z)^2}, \quad z\neq c.
$$
\end{corollary}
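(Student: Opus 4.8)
The plan is to realize $\frac{1}{c-z}$ as the reciprocal of the affine function $\vf(z)=c-z$ and then invoke Proposition \ref{p3.5}. First I would record that $\vf$ is $\bQ$-differentiable at every point, with $\vf'\equiv -1$: the constant function $z\mapsto c$ has zero $\bQ$-derivative (for instance by Proposition \ref{p2.2} with $n=0$ together with part~1 of Proposition \ref{p3.1}, or directly from Definition \ref{d2.1} with all $A_k=B_k=0$), while Proposition \ref{p2.2} with $n=1$ gives $(z)'=1$, so part~1 of Proposition \ref{p3.1} yields $(-z)'=-1$, and part~2 then gives $\vf'(z^0)=(c)'+(-z)'=-1$.

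Next, since $z^0\neq c$ we have $\vf(z^0)=c-z^0\neq 0$, and by continuity $\vf$ stays nonzero on a whole neighborhood of $z^0$; thus the hypotheses of Proposition \ref{p3.5} are satisfied. Applying that proposition gives
\[
    \left(\frac{1}{c-z}\right)'(z^0)=-\frac{1}{\vf(z^0)}\cdot\vf'(z^0)\cdot\frac{1}{\vf(z^0)}
    =-\frac{1}{c-z^0}\cdot(-1)\cdot\frac{1}{c-z^0}=\frac{1}{c-z^0}\cdot\frac{1}{c-z^0}.
\]

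Finally I would simplify $\frac{1}{c-z^0}\cdot\frac{1}{c-z^0}=\frac{1}{(c-z^0)^2}$: writing $q=c-z^0$, the quaternion $q$ commutes with $q^2$, so $q^2\cdot(q^{-1}q^{-1})=1=(q^{-1}q^{-1})\cdot q^2$, whence $q^{-1}q^{-1}=(q^2)^{-1}$ by uniqueness of the inverse. This yields $\left(\frac{1}{c-z}\right)'(z^0)=\frac{1}{(c-z^0)^2}$, as claimed. Since no step presents a genuine difficulty, the only point deserving a word of care is this last identification, which in a noncommutative algebra is not automatic but holds here because every quaternion commutes with its own powers; all the rest is a direct substitution into Propositions \ref{p2.2}, \ref{p3.1} and \ref{p3.5}.
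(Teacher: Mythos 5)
Your proof is correct and follows exactly the route the paper intends: the corollary is stated without proof as an immediate consequence of Proposition \ref{p3.5} applied to $\vf(z)=c-z$, which is what you do. Your extra care in checking that $\frac{1}{q}\cdot\frac{1}{q}=\frac{1}{q^2}$ despite noncommutativity is a reasonable (and correct) addition that the paper leaves implicit.
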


%    Corollary 3.8.
\begin{corollary}\label{c3.8}\em
If quaternionic functions $f$ and  $\vf$ are $\bQ$-differentiable at a point $z^0$ and $\vf\neq 0$ in a neighborhood of
$z^0$, then the functions $f\cdot \frac{1}{\vf}$ and $\frac{1}{\vf}\cdot f$ are also $\bQ$-differentiable at $z^0$ and we have:
%    (3-15)    (3-16)
\begin{align*}
    \left(f\cdot \frac{1}{\vf}\right)'(z^0) & =f'(z^0)\cdot \frac{1}{\vf(z^0)} -f(z^0)\,\frac{1}{\vf(z^0)} \cdot
        \vf'(z^0)\cdot \frac{1}{\vf(z^0)}\,
\end{align*}
and
\begin{align*}
    \left(\frac{1}{\vf}\cdot f \right)'(z^0) & =-\frac{1}{\vf(z^0)}\cdot \vf'(z^0)\cdot \frac{1}{\vf(z^0)} f(z^0)+
        \frac{1}{\vf(z^0)}\cdot f'(z^0)\,.
\end{align*}
\end{corollary}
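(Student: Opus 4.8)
The plan is to derive both formulas in Corollary \ref{c3.8} directly from the product rule (Proposition \ref{p3.1}, part 3) together with the reciprocal rule (Proposition \ref{p3.5}), treating $f\cdot\frac{1}{\vf}$ and $\frac{1}{\vf}\cdot f$ as ordinary products of two $\bQ$-differentiable functions. First I would note that, since $\vf$ is $\bQ$-differentiable at $z^0$ and $\vf\neq0$ in a neighborhood of $z^0$, Proposition \ref{p3.5} guarantees that $\frac{1}{\vf}$ is $\bQ$-differentiable at $z^0$ with $\bigl(\frac{1}{\vf}\bigr)'(z^0)=-\frac{1}{\vf(z^0)}\cdot\vf'(z^0)\cdot\frac{1}{\vf(z^0)}$. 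Hence the hypotheses of Proposition \ref{p3.1}(3) are met for the pair $f$, $\frac{1}{\vf}$ in either order, so the products are $\bQ$-differentiable at $z^0$.

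Next I would simply apply the product rule in each order. For $f\cdot\frac{1}{\vf}$ we get
\[
\left(f\cdot\frac{1}{\vf}\right)'(z^0)=f'(z^0)\cdot\frac{1}{\vf(z^0)}+f(z^0)\cdot\left(\frac{1}{\vf}\right)'(z^0)
=f'(z^0)\cdot\frac{1}{\vf(z^0)}-f(z^0)\,\frac{1}{\vf(z^0)}\cdot\vf'(z^0)\cdot\frac{1}{\vf(z^0)}.
\]
For $\frac{1}{\vf}\cdot f$, the product rule gives
\[
\left(\frac{1}{\vf}\cdot f\right)'(z^0)=\left(\frac{1}{\vf}\right)'(z^0)\cdot f(z^0)+\frac{1}{\vf(z^0)}\cdot f'(z^0)
=-\frac{1}{\vf(z^0)}\cdot\vf'(z^0)\cdot\frac{1}{\vf(z^0)}\,f(z^0)+\frac{1}{\vf(z^0)}\cdot f'(z^0),
\]
which is exactly the claimed formula. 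The order of the two terms and the placement of the non-commuting factors are forced by the non-commutative product rule, so no rearrangement is needed.

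There is essentially no obstacle here: the result is a mechanical corollary, and the only thing to be mildly careful about is that the product rule in Proposition \ref{p3.1}(3) is stated with the derivative of the \emph{first} factor on the left, so one must keep $f$ and $\frac{1}{\vf}$ in the stated order when expanding each of the two products, and then substitute the expression for $\bigl(\frac{1}{\vf}\bigr)'(z^0)$ from Proposition \ref{p3.5} without reordering. I would present the argument in two short paragraphs, one for each identity, writing ``by Propositions \ref{p3.1} and \ref{p3.5}'' and then displaying the two-line computation above.
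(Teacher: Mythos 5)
Your proof is correct and is exactly the intended derivation: the paper states Corollary \ref{c3.8} without proof as an immediate consequence of the product rule in Proposition \ref{p3.1}(3) applied to the pair $f$, $\frac{1}{\vf}$ in each order, with $\bigl(\frac{1}{\vf}\bigr)'(z^0)$ supplied by Proposition \ref{p3.5}. Your attention to the order of the non-commuting factors is the only point of substance, and you handle it correctly.
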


%    Proposition 3.9.
\begin{proposition}\label{p3.9}\em
Let a function $f(z)$ be defined on some neighborhood of a point $z^0 \in \bQ$ and
let a function $F(w)$ be defined on some neighborhood of the point
$w^0=f(z^0)$.  Assume that $f$ is $\bQ$-differentiable at $z^0$ and that $F$ is $\bQ$-differentiable at $w^0$.
If $F'(w^0)=\sum\limits_k A_k B_k$, then the composite $Ff$ is $\bQ$-differentiable at $z^0$ and we have:
$$\left( Ff\right)'(z^0)=\sum_k A_k f'(z^0) B_k.$$
\end{proposition}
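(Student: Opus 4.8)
The plan is to unwind both differentiability hypotheses and combine their linear parts. Since $F$ is $\bQ$-differentiable at $w^0=f(z^0)$, there exist sequences $A_k$, $B_k$ with $\sum_k A_k B_k = F'(w^0)$ finite and
\[
    F(w^0+\eta)-F(w^0)=\sum_k A_k\,\eta\,B_k+o(\eta).
\]
Since $f$ is $\bQ$-differentiable at $z^0$, we may write $f(z^0+h)-f(z^0)=\sum_j C_j\,h\,D_j+o(h)$; crucially, this shows $f(z^0+h)-f(z^0)=O(h)$, so the quantity $\eta:=f(z^0+h)-f(z^0)$ tends to $0$ as $h\to0$ and moreover $|\eta|\le c|h|$ for small $h$. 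The idea is then to substitute $\eta=f(z^0+h)-f(z^0)$ into the expansion of $F$, obtaining
\[
    (Ff)(z^0+h)-(Ff)(z^0)=\sum_k A_k\big[f(z^0+h)-f(z^0)\big]B_k + o\big(f(z^0+h)-f(z^0)\big).
\]

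Next I would handle the two resulting terms. For the error term, the estimate $|\eta|\le c|h|$ converts $o(\eta)$ into $o(h)$: if $|\om(w^0,\eta)|/|\eta|\to0$, then $|\om(w^0,\eta)|/|h| \le c\,|\om(w^0,\eta)|/|\eta| \to 0$ (with the convention that the term vanishes when $\eta=0$). For the main term, I would substitute the expansion of $f$'s increment into $\sum_k A_k[f(z^0+h)-f(z^0)]B_k = \sum_k A_k\big(\sum_j C_j h D_j + o(h)\big)B_k$. Expanding, the $o(h)$ piece contributes $\sum_k A_k\, o(h)\, B_k$, which is again $o(h)$ since $\sum_k|A_k||B_k|$ is finite (this is exactly the hypothesis that $F'(w^0)=\sum_k A_kB_k$ is finite, interpreted with absolute values as in the Lemma's style of estimate), and what remains is a representation of the form $\sum_{k,j} (A_kC_j)\,h\,(D_jB_k) + o(h)$, which is precisely the defining form \eqref{2-1} for $Ff$. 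Reading off \eqref{2-3}, the $\bQ$-derivative is $\sum_{k,j} A_k C_j D_j B_k = \sum_k A_k\big(\sum_j C_jD_j\big)B_k = \sum_k A_k\, f'(z^0)\, B_k$, as claimed.

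The main obstacle, and the only place requiring real care, is the legitimacy of interchanging the (possibly infinite) sum over $k$ with the substitution and the $o(h)$ manipulations — that is, making rigorous that $\sum_k A_k\,o(h)\,B_k=o(h)$ and that the doubly-indexed sum $\sum_{k,j}A_kC_jD_jB_k$ converges absolutely. Both follow from finiteness of $\sum_k|A_k|\,|B_k|$ (from $F$'s differentiability) and $\sum_j|C_j|\,|D_j|$ (from $f$'s), using submultiplicativity of the quaternion norm; I would state this as the key bookkeeping lemma and then the rest is a routine chase. One should also note the subtlety flagged in Remark \ref{r1.1}: nothing here uses division, so the argument is valid in any Banach algebra, and in particular no invertibility of $f$ or $F$ is needed.
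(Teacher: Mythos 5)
Your proof is correct and follows essentially the same route as the paper's: substitute the increment $\eta=f(z^0+h)-f(z^0)$ into the expansion of $F$, convert $o(\eta)$ into $o(h)$ via the bound $|\eta|\le c|h|$, and read off the double sum $\sum_{k,j}A_kC_j\,h\,D_jB_k$ to get $\sum_k A_k f'(z^0)B_k$. If anything, you are more careful than the paper, which divides by $|w-w^0|$ without addressing the case $f(z^0+h)=f(z^0)$ and does not comment on the absolute-convergence bookkeeping that you correctly flag as the one point needing attention.
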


%    Proof.
\begin{proof}
Let $z$ be in the neighborhood of $z^0$. Put $w=f(z)$. Then
\begin{align*}
    F(w)-F(w^0) & =\sum_k A_k (w-w^0)B_k+\om_1(w^0,w), \\
    f(z)-f(z^0) & =\sum_j C_j(z-z^0) D_j +\om_2(z^0,z),
\end{align*}
and using these presentations, we calculate
\begin{align*}
    F(f(z))& -F(f(z^0))  =\sum_k A_k (f(z)-f(z^0)) B_k +\om_1(f(z^0),f(z)) \\
    & =\sum_k A_k \bigg( \sum_j C_j(z-z^0)D_j \bigg) B_k +o(h)+\om_1(f(z^0),f(z)) \\
    & =\sum_k \sum_j A_k C_j(z-z^0)D_j B_k +o(h)+\om_1(f(z^0),f(z)).
\end{align*}
But since
$$
    \frac{|\om_1(f(z^0),f(z))|}{|z-z^0|} =\frac{|\om_1(f(z^0),f(z))|}{|w-w^0|} \cdot
        \frac{|z-z^0|}{|w-w^0|} \to 0, \quad z\to z^0,
$$
we have
\begin{align*}
    \left( Ff\right)'(z^0) & =\sum_k \sum_j A_k C_j D_j B_k=
        \sum_k A_k \bigg(\sum_j C_j D_j\bigg) B_k\\
    & =\sum_k A_k f'(z^0) B_k. %%\qedhere
\end{align*}
\end{proof}

Specializing the proposition to the case where $F(w)=w^n$ and applying \eqref{2-5} we get

%    Corollary 3.10.
\begin{corollary}\label{c3.10}\em
If a function $f$ is $\bQ$-differentiable, then
$$(f^n)'=f^{n-1}\cdot f'+f^{n-2}\cdot f'\cdot f+f^{n-3}\cdot f'\cdot f^2+\cdots +f'\cdot f^{n-1}.$$

\end{corollary}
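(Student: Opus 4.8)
The plan is to apply Proposition \ref{p3.9} directly with $F(w) = w^n$, since Corollary \ref{c3.10} is precisely the chain rule specialized to the $n$-th power function. First I would recall that by Proposition \ref{p2.2} the function $F(w) = w^n$ is $\bQ$-differentiable at every point $w^0 \in \bQ$, and by equation \eqref{2-5} its increment admits the representation
\begin{equation*}
    (w^0+k)^n - (w^0)^n = \sum_{j=1}^{n} (w^0)^{n-j}\, k\, (w^0)^{j-1} + o(k),
\end{equation*}
so that in the notation of Proposition \ref{p3.9} we may take $A_j = (w^0)^{n-j}$ and $B_j = (w^0)^{j-1}$ for $j = 1, \dots, n$, and correspondingly $F'(w^0) = \sum_{j=1}^n A_j B_j = n (w^0)^{n-1}$.

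Next I would invoke Proposition \ref{p3.9} with this choice of $F$ and with $w^0 = f(z^0)$. Since $f$ is assumed $\bQ$-differentiable at $z^0$ and $F$ is $\bQ$-differentiable at $w^0 = f(z^0)$, the proposition gives that the composite $F f = f^n$ is $\bQ$-differentiable at $z^0$ with
\begin{equation*}
    (f^n)'(z^0) = \sum_{j=1}^{n} A_j\, f'(z^0)\, B_j = \sum_{j=1}^{n} f(z^0)^{n-j}\, f'(z^0)\, f(z^0)^{j-1}.
\end{equation*}
Writing out the sum term by term yields exactly the claimed formula
\begin{equation*}
    (f^n)' = f^{n-1} f' + f^{n-2} f' f + f^{n-3} f' f^2 + \cdots + f' f^{n-1},
\end{equation*}
where the dependence on $z^0$ is suppressed to match the statement.

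There is essentially no obstacle here: the statement is a corollary, and the only thing to verify carefully is the bookkeeping that matches the index $j$ running over the terms of \eqref{2-5} with the roles of $A_j$ and $B_j$ in Proposition \ref{p3.9}, so that the powers of $f(z^0)$ land on the correct sides of $f'(z^0)$. The mild subtlety worth a sentence of comment is that Proposition \ref{p3.9} is stated for a representation of $F'(w^0)$ as a \emph{finite} sum $\sum_k A_k B_k$, which is exactly what \eqref{2-5} provides (with $n$ terms), so no convergence issue arises, in contrast to the exponential and trigonometric cases. Hence the proof reduces to quoting Proposition \ref{p3.9} and Proposition \ref{p2.2} and expanding the resulting sum.
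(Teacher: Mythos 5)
Your proposal is correct and follows exactly the route the paper takes: the paper's entire proof is the remark that the corollary follows by ``specializing the proposition to the case where $F(w)=w^n$ and applying \eqref{2-5}'', which is precisely your argument, only spelled out in more detail. Your added observation that the sum in \eqref{2-5} is finite, so no convergence issue arises in invoking Proposition \ref{p3.9}, is a sensible extra check that the paper leaves implicit.
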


%        4.
\section{The $\bQ$-derivative of the quaternion logarithm function}

    A quaternion $w$ is called the \emph{logarithm} of a finite quaternion $z\neq 0$
if $z=e^w$, in which case we write $w=\ln z$.

    In order to define the $\bQ$-derivative $w'=(\ln z)'$, we first note that
the $\bQ$-derivative of the left-hand side of the identity $z=e^{\ln z}$ exits and is $1$
by Proposition \ref{p2.2}. Applying now Proposition \ref{p3.9} to the right-hand side and taking into account
\eqref{2-19}, we get
%    (4-1)
\begin{align}\label{4-1}
    1 & =\left(1+\frac{w}{2!}+\frac{w^2}{3!}+\cdots\right) \cdot w'+
        \left(\frac{1}{2!}+\frac{w}{3!}+\frac{w^2}{4!}+\cdots\right) \cdot w'\cdot w \\
    & \quad + \left(\frac{1}{3!}+\frac{w}{4!}+\frac{w^2}{5!}+\cdots\right) \cdot w'\cdot w^2+\cdots\;. \notag
\end{align}

    Thus, the $\bQ$-derivative $w'=(\ln z)'$ satisfies Equality \eqref{4-1}.

%    Remark 4.1.
\begin{remark}\label{r4.1}
%If we could write $w\cdot w',w^2 \cdot w',\dots$ instead of $w'\cdot w,w'\cdot w^2,\dots\;$, then
If $ww'$ and $w'w$ were equal, then we could write $w\cdot w',w^2 \cdot w',\dots$ instead of $w'\cdot w,w'\cdot w^2,\dots\;$,
and then Equality \eqref{4-1} would take the form
\begin{align*}
    1 & =\left(1+\frac{w}{2!}+\frac{w^2}{3!}+\cdots\right) \cdot w'+
        \left(\frac{w}{2!}+\frac{w^2}{3!}+\cdots\right) \cdot w' \\
    & \quad + \left(\frac{w^2}{3!}+\frac{w^3}{4!}+\frac{w^4}{5!}+\cdots\right) \cdot w'+\cdots \\
    & =\left(1+w+\frac{w^2}{2!}+\frac{w^3}{3!}+\cdots\right) \cdot w'=e^w\cdot w'=e^{\ln z}\cdot (\ln z)'=z\cdot (\ln z)'.
\end{align*}
So, we would obtain the classical formula
%   (4-2)
$$
    (\ln z)'=\frac{1}{z}\,,
$$
that is well known in the case of a complex variable $z$.
\end{remark}

\bigskip

\label{lastpage}

\end{document}